%
%
%

\documentclass{svproc}
%
%
\usepackage{latexsym}
\usepackage{physics}
\usepackage{amssymb,amsmath,amscd}
\usepackage{mathtools}
\usepackage{graphicx}
\usepackage{hyperref}

\newcommand{\Dgm}{\mathrm{Dgm}}
\begin{document}
\mainmatter              
\title{Some new methods to build group equivariant non-expansive operators in TDA}
\titlerunning{On the construction of new classes of GENEOs}  
%


\author{Nicola Quercioli 
}

\authorrunning{Nicola Quercioli} 

\institute{
Department of Mathematics, University of Bologna, Italy
\email{nicola.quercioli2@unibo.it}
}
\maketitle              

\begin{abstract}
Group equivariant operators are playing a more and more relevant role in machine learning and topological data analysis. In this paper we present some new results concerning the construction of $G$-equivariant non-expansive operators (GENEOs) from a space $\varPhi$ of real-valued bounded continuous functions on a topological space $X$ to $\varPhi$ itself. The space $\varPhi$ represents our set of data, while $G$ is a subgroup of the group of all self-homeomorphisms of $X$, representing the invariance we are interested in.
\keywords{Natural pseudo-distance, filtering function, group action, group equivariant non-expansive operator, persistent homology, persistence diagram, topological data analysis}
\end{abstract}


\section*{Introduction}
In the recent years Topological Data Analysis (TDA) has imposed itself as a useful tool in order to manage huge amount of data of the present digital world~\cite{Ca09}. In particular, persistent homology has assumed a relevant role as an efficient tool for qualitative and topological comparison of data~\cite{EdMo13}, since in several applications we can express the acts of measurement by $\mathbb{R}^m$-valued functions defined on a topological space, so inducing filtrations on such a space~\cite{BiDFFa08}. These filtrations can be analyzed by means of the standard methods used in persistent homology. For further and detailed information about persistent homology we refer the reader to~\cite{EdHa08}.

The importance of group equivariance in machine learning is well known (see, e.g.,~\cite{AnRoPo16,CoWe16,MaVoKo17,MaBoBr15}). Our work on group equivariant non-expansive operators (GENEOs) is devoted to possibly establish a link between persistence theory and machine learning. Our basic idea is that acts of measurement are directly influenced by the observer, and we should mostly focus on well approximating the observer, rather than precisely describing the data (see, e.g.,~\cite{Fr16}). In some sense, we could see the observer as a collection of GENEOs acting on a suitable space of data and encode in the choice of these operators the invariance we are interested in.

The concept of invariance group leads us to consider the \emph{natural pseudo-distance} as our main tool to compare data. Let us consider two real-valued functions $\varphi$, $\psi$ on a topological space $X$, representing the data we want to compare, and a group $G$ of self-homeomorphisms of $X$. Roughly speaking, the computation of the natural pseudo-distance $d_G$ between $\varphi$ and $\psi$ is the attempt of finding the best correspondence between these two functions with respect to the invariance group $G$.

Unfortunately, $d_G$ is difficult to compute, but  \cite{FrJa16} illustrates a possible path to approximate the natural pseudo-distance by means of a dual approach involving persistent homology and GENEOs. In particular, one can see that a good approximation of the space $\mathcal{F}(\varPhi,G)$ of all GENEOs corresponds to a good approximation of the pseudo-distance $d_G$. In order to extend our knowledge about  $\mathcal{F}(\varPhi,G)$, we devote this paper to introduce some new methods to construct new GENEOs from a given set of GENEOs.

The outline of our paper follows. In Section \ref{model} we briefly present our mathematical framework. In Section \ref{powermeans} we give a new result about building GENEOs by power means and show some examples to explain why this method is useful and meaningful. In Section \ref{series} we illustrate a new procedure to build new GENEOs by means of series of GENEOs. In particular, this is a first example of costruction of an operator starting from an infinite set of GENEOs.

\section{Our mathematical model}
\label{model}

In this section the mathematical model illustrated in~\cite{FrJa16} will be briefly recalled.
Let $X$ be a (non-empty) topological space, and $\varPhi$ be a topological subspace of the topological space $C^{0}_{b}(X,\mathbb{R})$ of the continuous bounded functions from $X$ to $\mathbb{R}$, endowed with the topology induced by the sup-norm $\| \cdot \|_{\infty}$.
The  elements of $\varPhi$ represent our data and are called \textit{admissible filtering functions} on the space $X$. We also assume that $\varPhi$ contains at least the constant functions $c$ such that $ |c| \le \sup_{\varphi \in \varPhi} \| \varphi \|_\infty$.
The invariance of the space $\varPhi$ is represented by the action of a subgroup $G$ of the group $\mathrm{Homeo}(X)$ of all homeomorphisms from $X$ to itself.
The group $G$ is used to act on $\Phi$ by composition on the right, i.e. we suppose that $\varphi \circ g$ is still an element of $\varPhi$ for any $\varphi\in \Phi$ and any $g\in G$. In other words, the functions $\varphi$ and $\varphi\circ g$, elements of $\varPhi$, are considered equivalent to each other for every $g\in G$.

In this theoretical framework we use the \emph{natural pseudo-distance $d_G$} to compare functions.

\begin{definition}\label{defdG}
For every $\varphi_1,\varphi_2\in\Phi$ we can define the function $d_G(\varphi_1,\varphi_2):=\inf_{g \in G}\sup_{x \in X}\left|\varphi_1(x)-\varphi_2(g(x))\right|$ from $\varPhi \times \varPhi$ to $\mathbb{R}$. The function $d_G$ is called the \emph{natural pseudo-distance} associated with the group $G$ acting on $\Phi$.
\end{definition}

We can consider this (extended) pseudo-metric as the ground truth for the comparison of functions in $\Phi$ with respect to the action of the group $G$. Unfortunately, $d_G$ is usually difficult to compute. However, the natural pseudo-distance can be studied and approximated by a method involving \emph{$G$-equivariant non-expansive operators}.

\begin{definition}

A $G$-equivariant non-expansive operator (GENEO) for the pair $(\Phi,G)$ is a function
$$F : \Phi \longrightarrow \Phi $$
that satisfies the following properties:
\begin{enumerate}
\item F is $G$-equivariant: $F(\varphi\circ g)=F(\varphi)\circ g, \quad \forall \  \varphi\in \Phi, \quad \forall \  g \in G$;
\item F is non-expansive: $\| F(\varphi_{1})-F(\varphi_{2})\|_{\infty} \le \| \varphi_{1} -\varphi_{2}\|_{\infty}, \quad \forall \ \varphi_{1},\varphi_{2}\in \Phi$.
\end{enumerate}
\end{definition}

The symbol $\mathcal{F}(\Phi,G)$ is used to denote the set of all $G$-equivariant non-expansive operators for $(\Phi,G)$.
Obviously $\mathcal{F}(\Phi,G)$ is not empty because it contains at least the identity operator.

\begin{remark}
The non-expansivity property means that the operators in $\mathcal{F}(\Phi,G)$ are $1$-Lipschitz functions and therefore they are continuous. We underline that GENEOs are not required to be linear.
\end{remark}

If $X$ has nontrivial homology in degree $k$, the following key result holds~\cite{FrJa16}.


 \begin{theorem}\label{maintheoremforG}
$d_G(\varphi_1,\varphi_2)=\sup_{F\in \mathcal{F}(\varPhi,G)} d_{match}(\Dgm_k(F(\varphi_1)),\Dgm_k(F(\varphi_2)))$, where $\Dgm_k(\varphi)$ denotes the $k$-th persistence diagram of the function $\varphi:X\to\mathbb{R}$ and $d_{match}$ is the classical matching distance.
\end{theorem}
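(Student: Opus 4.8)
The statement is an equality between two quantities, so the plan is to prove the two inequalities separately, relying on two standard facts about persistence diagrams: the bottleneck (matching) stability inequality $d_{match}(\Dgm_k(\alpha),\Dgm_k(\beta))\le\|\alpha-\beta\|_\infty$, and the invariance $\Dgm_k(\alpha\circ g)=\Dgm_k(\alpha)$ for every $g\in\mathrm{Homeo}(X)$, which holds because $\alpha$ and $\alpha\circ g$ induce homeomorphic sublevel-set filtrations.

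For the inequality $\sup_{F\in\mathcal{F}(\varPhi,G)}d_{match}(\Dgm_k(F(\varphi_1)),\Dgm_k(F(\varphi_2)))\le d_G(\varphi_1,\varphi_2)$, I would fix $F\in\mathcal{F}(\varPhi,G)$ and $g\in G$ and simply chain the two facts above with the defining properties of a GENEO:
\begin{align*}
d_{match}(\Dgm_k(F(\varphi_1)),\Dgm_k(F(\varphi_2))) &= d_{match}(\Dgm_k(F(\varphi_1)),\Dgm_k(F(\varphi_2)\circ g)) \\
&= d_{match}(\Dgm_k(F(\varphi_1)),\Dgm_k(F(\varphi_2\circ g))) \\
&\le \|F(\varphi_1)-F(\varphi_2\circ g)\|_\infty \le \|\varphi_1-\varphi_2\circ g\|_\infty ,
\end{align*}
where the first equality is homeomorphism-invariance of the diagram, the second is $G$-equivariance of $F$, the third relation is stability, and the last is non-expansiveness of $F$. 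Since $\|\varphi_1-\varphi_2\circ g\|_\infty=\sup_{x\in X}|\varphi_1(x)-\varphi_2(g(x))|$, taking the infimum over $g\in G$ bounds the left-hand side by $d_G(\varphi_1,\varphi_2)$, and then the supremum over $F$ yields this half of the statement. This direction is routine.

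The reverse inequality $d_G(\varphi_1,\varphi_2)\le\sup_{F}d_{match}(\Dgm_k(F(\varphi_1)),\Dgm_k(F(\varphi_2)))$ is the substantial part, and it is here that the hypothesis of nontrivial $k$-homology of $X$ is used. The identity operator only delivers the (generally strict) lower bound $d_{match}(\Dgm_k(\varphi_1),\Dgm_k(\varphi_2))\le d_G(\varphi_1,\varphi_2)$, so a genuine construction is required. I would fix $\varepsilon>0$, set $\delta=d_G(\varphi_1,\varphi_2)$, and build a GENEO $F$ (depending on $\varphi_1,\varphi_2,\varepsilon$) with $d_{match}(\Dgm_k(F(\varphi_1)),\Dgm_k(F(\varphi_2)))\ge\delta-\varepsilon$. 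The guiding idea is that, by definition of the infimum, no $g\in G$ can lower $\sup_{x}|\varphi_1(x)-\varphi_2(g(x))|$ below $\delta$; the operator $F$ must convert this irreducible discrepancy into a persistence feature whose position differs by almost $\delta$ between $F(\varphi_1)$ and $F(\varphi_2)$. Nontriviality of $H_k(X)$ is what guarantees that the sublevel-set filtrations of the output functions carry an essential $k$-dimensional class, so that each $\Dgm_k(F(\varphi_i))$ contains a point encoding the relevant level.

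The main obstacle is precisely this construction and its analysis. One must exhibit an $F$ that is at the same time $G$-equivariant and non-expansive, and then prove a lower bound on $d_{match}$, which is the delicate point: the matching distance is in general far smaller than the sup-norm distance, since it can collapse large pointwise differences that are invisible to homology. The construction therefore has to be engineered so that the discrepancy $\delta$ surfaces as the displacement of a diagram point that cannot be matched more cheaply — in particular, cannot be matched to the diagonal — and the nontrivial-homology assumption is exactly what rules out such a collapse. Establishing the GENEO properties and this matching lower bound simultaneously is the technical heart of the argument.
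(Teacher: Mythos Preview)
The paper does not actually prove this theorem: it is stated with a citation to \cite{FrJa16} and no argument is given in the present paper. There is therefore no in-paper proof to compare your proposal against.

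On your proposal itself: the inequality $\sup_F d_{match}\le d_G$ is handled correctly and completely; your chain of equalities and inequalities is exactly the standard argument. For the reverse inequality you have correctly located where the work lies and why nontrivial $H_k(X)$ is needed, but you have not supplied the construction of $F$, only a specification of what it must achieve. In the original source the operator is built concretely: one sends each $\varphi$ to a suitably truncated constant function determined by its $d_G$-distance to a fixed reference (essentially to the orbit of $\varphi_1$), so that $F(\varphi_1)$ and $F(\varphi_2)$ are constants differing by approximately $\delta$; the nontrivial class in $H_k(X)$ then forces each $\Dgm_k$ to contain an essential point at height equal to that constant, and the matching distance between two such diagrams equals the difference of the constants. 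Verifying $G$-equivariance is immediate because $d_G$ is $G$-invariant and constants are fixed by composition with homeomorphisms; non-expansiveness follows from the triangle inequality for $d_G$ together with $d_G\le\|\cdot\|_\infty$. Your outline is compatible with this, but as written it stops short of the key step, so the proposal has a genuine gap in the hard direction.
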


Persistent homology and the natural pseudo-distance are related to each other by Theorem~\ref{maintheoremforG} via GENEOs.
This result enables us to approximate $d_G$ by means of $G$-equivariant non-expansive operators. The construction of new classes of GENEOs is consequently a relevant step in the approximation of the space $\mathcal{F}(\Phi,G)$, and hence in the computation of the natural pseudo-distance, so justifying the interest for the results shown in Sections~\ref{powermeans} and \ref{series}.

\section{Building new GENEOs by means of power means}
\label{powermeans}
In this section we introduce a new method to build GENEOs, concerning the concept of power mean.
Now we recall a proposition that enables us to find new GENEOs, based on the use of $1$-Lipschitz functions (see~\cite{FrQu16}).
\begin{proposition}\label{lipfun}
Let $L$ be a 1-Lipschitz function from $\mathbb{R}^n$ to $\mathbb{R}$, where $\mathbb{R}^n$ is endowed with the norm $\|(x_1,\dots,x_n)\|_\infty=\max\{|x_1|,\dots,|x_n|\}$. Assume also that $F_1, \dots, F_n$ are GENEOs for $(\varPhi,G)$. Let us define the function $L^*(F_1, \dots , F_n):\varPhi \longrightarrow C^0_b(X,\mathbb{R})$ by setting
\begin{equation*}
    L^*(F_1, \dots , F_n)(\varphi)(x):= L(F_1(\varphi)(x), \dots , F_n(\varphi)(x)).
\end{equation*}
If $L^*(F_1, \dots , F_n)(\varPhi)\subseteq \varPhi$, the operator $L^*(F_1, \dots , F_n)$ is a GENEO for $(\varPhi,G)$.
\end{proposition}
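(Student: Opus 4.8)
The plan is to verify directly that the operator $L^*(F_1,\dots,F_n)$ satisfies the two defining properties of a GENEO. The hypothesis $L^*(F_1,\dots,F_n)(\varPhi)\subseteq\varPhi$ is precisely what guarantees that the operator is well defined as a map $\varPhi\to\varPhi$, so that part is handed to us and nothing further is needed there; it then remains only to check $G$-equivariance and non-expansivity. Throughout I abbreviate $F:=L^*(F_1,\dots,F_n)$ and exploit the fact that $F$ is defined \emph{pointwise} in $x$, which lets both properties be reduced to a pointwise computation followed by passing to the relevant norm.

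First I would establish $G$-equivariance, which is a pure unwinding of definitions using the equivariance of each $F_i$. Fixing $\varphi\in\varPhi$, $g\in G$ and $x\in X$, I would compute
\begin{align*}
F(\varphi\circ g)(x) &= L\bigl(F_1(\varphi\circ g)(x),\dots,F_n(\varphi\circ g)(x)\bigr)\\
&= L\bigl((F_1(\varphi)\circ g)(x),\dots,(F_n(\varphi)\circ g)(x)\bigr)\\
&= L\bigl(F_1(\varphi)(g(x)),\dots,F_n(\varphi)(g(x))\bigr)= F(\varphi)(g(x)),
\end{align*}
the second equality being the equivariance of each $F_i$. Since this holds for every $x\in X$, it gives $F(\varphi\circ g)=F(\varphi)\circ g$, i.e. property~(1).

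The core of the argument is the non-expansivity estimate, and this is where the choice of the norm $\|\cdot\|_\infty$ on $\mathbb{R}^n$ is essential. For $\varphi_1,\varphi_2\in\varPhi$ and a fixed $x\in X$, I would bound the pointwise difference by first invoking the $1$-Lipschitz property of $L$ with respect to $\|\cdot\|_\infty$ and then the non-expansivity of each $F_i$:
\begin{align*}
\bigl|F(\varphi_1)(x)-F(\varphi_2)(x)\bigr|
&\le \max_{1\le i\le n}\bigl|F_i(\varphi_1)(x)-F_i(\varphi_2)(x)\bigr|\\
&\le \max_{1\le i\le n}\|F_i(\varphi_1)-F_i(\varphi_2)\|_\infty
\le \|\varphi_1-\varphi_2\|_\infty.
\end{align*}
Taking the supremum over $x\in X$ then yields $\|F(\varphi_1)-F(\varphi_2)\|_\infty\le\|\varphi_1-\varphi_2\|_\infty$, which is property~(2), and concludes the verification.

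I do not expect a genuine obstacle here, since the whole argument is a direct computation; the single point deserving attention is the compatibility of norms in the non-expansivity step. The $1$-Lipschitz hypothesis on $L$ is stated with respect to the $\|\cdot\|_\infty$ norm on $\mathbb{R}^n$ exactly so that the vector of coordinate differences is controlled by its largest entry, which is in turn controlled uniformly in $x$ by the sup-norm distances $\|F_i(\varphi_1)-F_i(\varphi_2)\|_\infty$. Had a different norm been chosen on $\mathbb{R}^n$, the Lipschitz constant relative to $\|\cdot\|_\infty$ would in general exceed $1$ and non-expansivity could fail, so this is the hypothesis that makes the construction go through.
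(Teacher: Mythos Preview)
Your argument is correct: the direct pointwise verification of $G$-equivariance from the equivariance of each $F_i$, followed by the chain of inequalities using the $1$-Lipschitz property of $L$ with respect to $\|\cdot\|_\infty$ and then the non-expansivity of each $F_i$, is exactly the natural proof and there is no gap.

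As for comparison with the paper: the paper does not actually prove this proposition. It is stated there as a recalled result with a reference to~\cite{FrQu16}, and no argument is given in the present text. Your write-up is precisely the standard verification one would expect that reference to contain, so there is nothing substantive to contrast.
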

In order to apply this proposition, we recall some definitions and properties about power means and $p$-norms.
Let us consider a sample of real numbers $x_1, \dots, x_n$ and a real number $p>0$. As well known, the power mean $M_p(x_1, \dots, x_n)$ of $x_1, \dots, x_n$ is defined by setting
\begin{equation*}
    M_p(x_1, \dots, x_n):= \left(\frac{1}{n}\sum_{i=1}^n |x_i|^p\right)^\frac{1}{p}.
\end{equation*}

In order to proceed, we consider the function $\|\cdot \|_p: \mathbb{R}^n \longrightarrow \mathbb{R}$ defined by setting
\begin{equation*}
    \| x \|_{p}=(|x_1|^p + |x_2|^p + \dots + |x_n|^p)^\frac{1}{p}
\end{equation*}
where $x=(x_1,\dots,x_n)$ is a point of $\mathbb{R}^n$. It is well know that, for $p\ge 1$, $\|\cdot\|_p$ is a norm and that for any $x \in \mathbb{R}^n$, we have $\lim_{p \to \infty}\|x\|_p=\|x\|_\infty$.
Finally, it is easy to check that if $x \in \mathbb{R}^n$ and $0< p < q < \infty$, it holds that
\begin{equation}
\label{ineqpq}
    \|x\|_q \le \|x\|_{p} \le n^{\frac{1}{p} - \frac{1}{q}} \|x\|_q.
\end{equation}

For $q$ tending to infinity, we obtain a similar inequality:
\begin{equation}
\label{ineqpinfty}
    \|x\|_\infty \le \|x\|_p \le n^\frac{1}{p} \|x\|_\infty.
\end{equation}

Now we can define a new class of GENEOs. Let us consider $F_1, \dots , F_n$ GENEOs for $(\varPhi,G)$ and $p>0$. Let us define the operator $M_p(F_1, \dots, F_n): \varPhi \longrightarrow C^0_b(X, \mathbb{R})$ by setting
\begin{equation*}
    M_p(F_1, \dots, F_n)(\varphi)(x):= M_p(F_1(\varphi)(x), \dots, F_n(\varphi)(x)).
\end{equation*}

\begin{theorem}
If $p \ge 1$ and $M_p(F_1, \dots, F_n)(\varPhi) \subseteq \varPhi$, $M_p(F_1, \dots, F_n)$ is a GENEO for $(\varPhi,G)$.
\end{theorem}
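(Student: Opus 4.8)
The plan is to reduce the statement to Proposition~\ref{lipfun} by recognizing $M_p(F_1,\dots,F_n)$ as the image under the construction $L^*$ of a suitable $1$-Lipschitz function $L$. The key observation is that the power mean can be rewritten in terms of the $p$-norm as
$$M_p(x_1,\dots,x_n) = \left(\frac{1}{n}\sum_{i=1}^n |x_i|^p\right)^{\frac{1}{p}} = n^{-\frac{1}{p}}\,\|x\|_p,$$
where $x=(x_1,\dots,x_n)$. Hence, setting $L:=n^{-\frac{1}{p}}\|\cdot\|_p:\mathbb{R}^n\longrightarrow\mathbb{R}$, the very definitions give $M_p(F_1,\dots,F_n)=L^*(F_1,\dots,F_n)$. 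So it suffices to verify that $L$ is $1$-Lipschitz with respect to the sup-norm $\|\cdot\|_\infty$ on $\mathbb{R}^n$; then Proposition~\ref{lipfun}, combined with the hypothesis $M_p(F_1,\dots,F_n)(\varPhi)\subseteq\varPhi$, immediately yields that $M_p(F_1,\dots,F_n)$ is a GENEO for $(\varPhi,G)$.

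The core step is this Lipschitz estimate, and it is exactly here that the assumption $p\ge 1$ enters, since it guarantees that $\|\cdot\|_p$ is a genuine norm. First I would invoke the reverse triangle inequality for $\|\cdot\|_p$ to obtain $\big|\,\|x\|_p-\|y\|_p\,\big|\le\|x-y\|_p$ for all $x,y\in\mathbb{R}^n$. Then I would control $\|x-y\|_p$ by the sup-norm through the right-hand inequality in~(\ref{ineqpinfty}), namely $\|x-y\|_p\le n^{\frac{1}{p}}\|x-y\|_\infty$. Combining the two estimates and accounting for the normalizing factor $n^{-\frac{1}{p}}$ gives
$$|L(x)-L(y)| = n^{-\frac{1}{p}}\,\big|\,\|x\|_p-\|y\|_p\,\big| \le n^{-\frac{1}{p}}\,n^{\frac{1}{p}}\,\|x-y\|_\infty = \|x-y\|_\infty,$$
so that $L$ is $1$-Lipschitz as required.

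I expect no genuine obstacle beyond this computation: once the identification $M_p=n^{-1/p}\|\cdot\|_p$ is made, the conclusion follows mechanically from Proposition~\ref{lipfun}. The only subtle point worth emphasizing is the role of the hypothesis $p\ge 1$: for $0<p<1$ the functional $\|\cdot\|_p$ fails to be subadditive, the reverse triangle inequality breaks down, and the argument above no longer produces a $1$-Lipschitz $L$, which is precisely why the theorem is stated under the restriction $p\ge 1$.
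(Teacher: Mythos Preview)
Your proposal is correct and follows essentially the same approach as the paper: both reduce to Proposition~\ref{lipfun} by showing that $M_p=n^{-1/p}\|\cdot\|_p$ is $1$-Lipschitz with respect to $\|\cdot\|_\infty$, using the reverse triangle inequality for $\|\cdot\|_p$ (valid since $p\ge 1$) together with inequality~(\ref{ineqpinfty}). The only difference is cosmetic---you name the function $L$ explicitly, while the paper writes out $M_p$ directly---but the chain of inequalities is identical.
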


\begin{proof}
If we show that $M_p$ is a $1$-Lipschitz function for $p \ge 1$, Proposition \ref{lipfun} will ensure us that $M_p(F_1, \dots, F_n)$ is a GENEO.

Let $p \ge 1$ and $x,y \in \mathbb{R}^n$. Since $\| \cdot \|_p$ is a norm, the reverse triangle inequality holds. Therefore, because of (\ref{ineqpinfty}) we have that:
\begin{align*}
    \left| \left(\frac{1}{n}\sum_{i=1}^n |x_i|^p\right)^\frac{1}{p} - \left(\frac{1}{n}\sum_{i=1}^n |y_i|^p\right)^\frac{1}{p}\right| & = \left(\frac{1}{n}\right)^\frac{1}{p} \left| \left(\sum_{i=1}^n |x_i|^p\right)^\frac{1}{p} - \left(\sum_{i=1}^n |y_i|^p\right)^\frac{1}{p}\right|\\
    & = \left(\frac{1}{n}\right)^\frac{1}{p} \left| \| x \|_p - \| y \|_p \right|\\
    & \le \left(\frac{1}{n}\right)^\frac{1}{p} \| x - y \|_p \\
    & \le \left(\frac{1}{n}\right)^\frac{1}{p} n^\frac{1}{p} \| x - y \|_\infty = \| x - y \|_\infty.
\end{align*}
Hence, for $p \ge 1$ $M_p$ is non-expansive (i.e. $1$-Lipschitz) and the statement of our theorem is proved.

\end{proof}

\begin{remark}
If $ 0 < p<1$ and $n > 1$, $M_p$ is not a $1$-Lipschitz function. This can be easily proved by showing that for $x_2=x_3=\dots=x_n=1$ the derivative $\frac{\partial M_p}{\partial x_1}$ is not bounded.
\end{remark}

\subsection{Examples}
In this subsection we want to justify the use of the operator $M_p$. In order to make this point clear, let us consider the space $\varPhi$ of all $1$-Lipschitz functions from the unit circle $S^1$ to $[0,1]$ and the invariance group $G$ of all rotations of $S^1$. Now, we can take into consideration the following operators:
\begin{itemize}
    \item the identity operator $F_1:\varPhi \longrightarrow \varPhi$;
    \item the operator $F_2:\varPhi \longrightarrow \varPhi$ defined by setting $F_2(\varphi):= \varphi \circ \rho_\frac{\pi}{2}$ for any $\varphi \in \varPhi$, where $\rho_\frac{\pi}{2}$ is the rotation through a $\frac{\pi}{2}$ angle.
\end{itemize}
Let us set $\bar{\varphi} = |\sin{x}|$ and $\bar{\psi}= \sin^2{x}$.
As we can see in Figures 1 and 2, the functions $F_i(\bar{\varphi})$ and $F_i(\bar{\psi})$ have the same persistence diagrams for $i = 1,2$. In order to distinguish $\bar{\varphi}$ and $\bar{\psi}$, we define the operator $F:\varPhi \longrightarrow \varPhi$ by setting $F(\varphi):=M_1(F_1,F_2)(\varphi)= \frac{F_1(\varphi) + F_2(\varphi)}{2}$. In particular, 
\begin{equation}
    F(\bar{\varphi}):=M_1(F_1,F_2)(\bar{\varphi})= \frac{F_1(\bar{\varphi}) + F_2(\bar{\varphi})}{2}= \frac{|\sin{x}| + |\cos{x}|}{2}
\end{equation}
and
\begin{equation}
F(\bar{\psi}):=M_1(F_1,F_2)(\bar{\psi})= \frac{F_1(\bar{\psi}) + F_2(\bar{\psi})}{2}= \frac{sin^2{x} + \cos^2{x}}{2}=\frac{1}{2}.
\end{equation}

We can easily check that $F(\bar{\varphi})$ and $F(\bar{\psi})$ have different persistence diagrams; thus $F$ allows us to distinguish between $\bar{\varphi}$ and $\bar{\psi}$. All this proves that the use of the operator $M_1$ can increase the information, letting $F_1$ and $F_2$ cooperate.

\begin{figure}[htbp]
\centering
\includegraphics[scale=0.35]{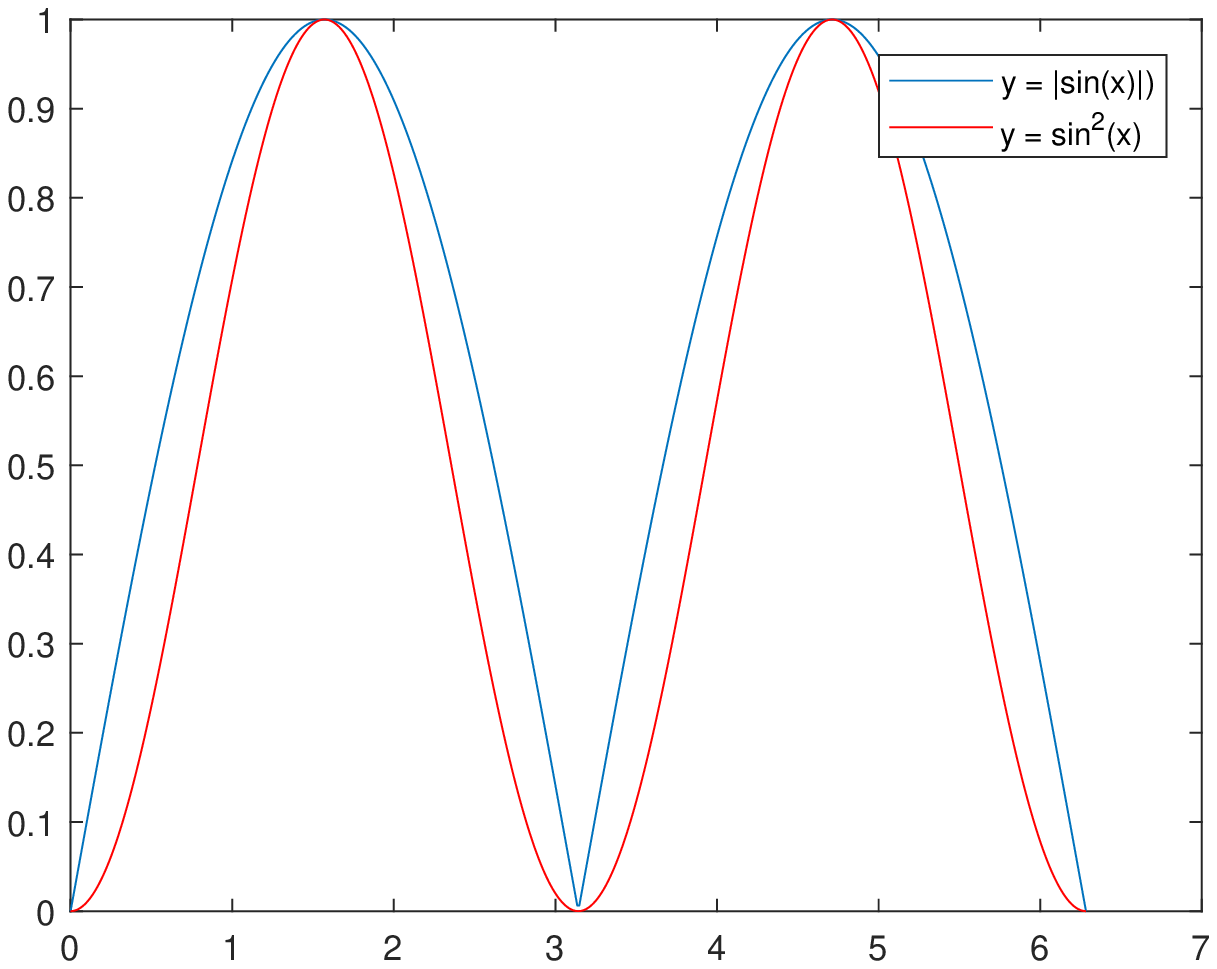}
\qquad\qquad
\includegraphics[scale=0.35]{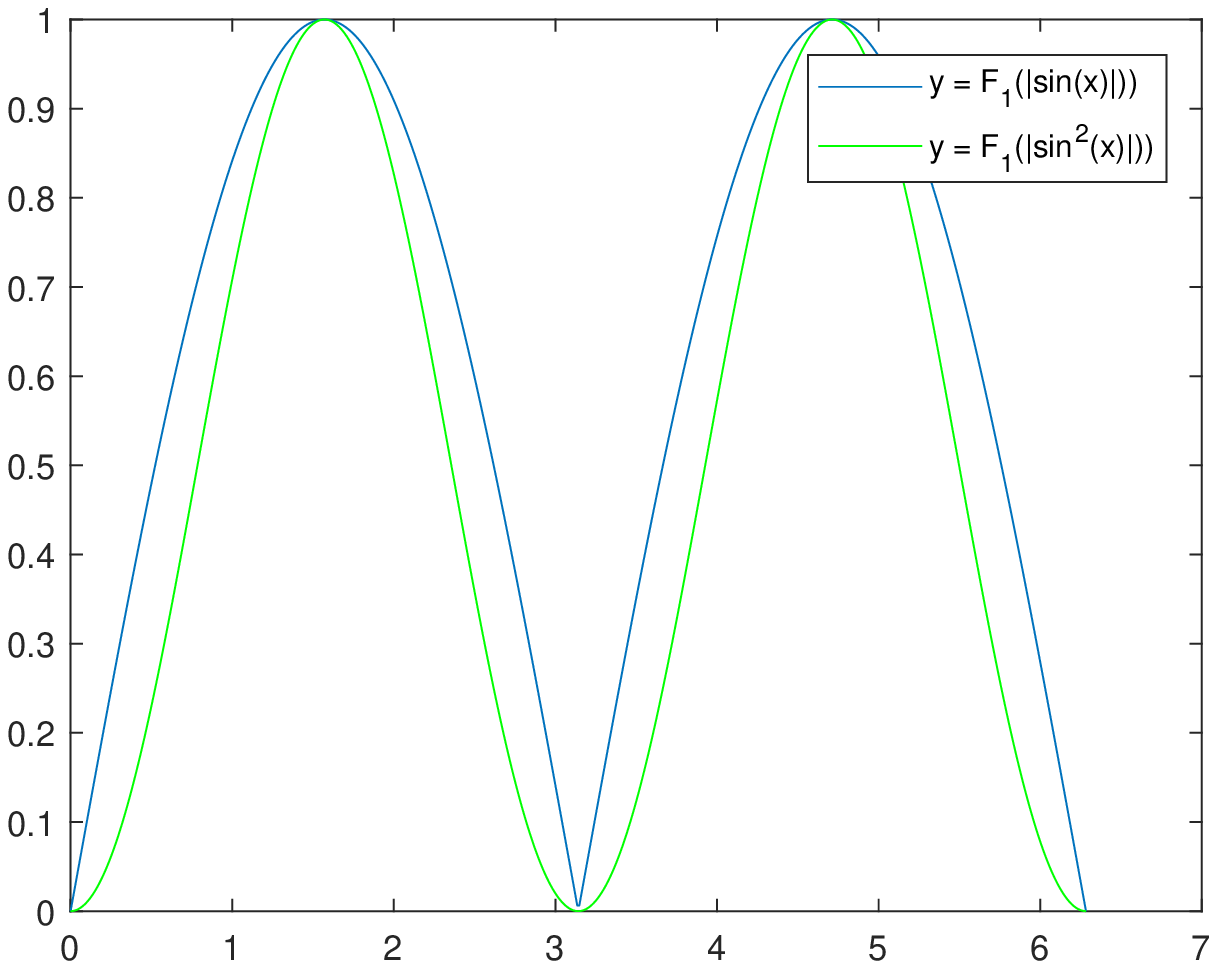}
\caption{On the left: $\bar{\varphi}$ and $\bar{\psi}$ have the same persistence diagrams. On the right: $F_1(\bar{\varphi})$ and $F_1(\bar{\psi})$ have the same persistence diagrams.}
\end{figure}

\begin{figure}[htbp]
\centering
\includegraphics[scale=0.35]{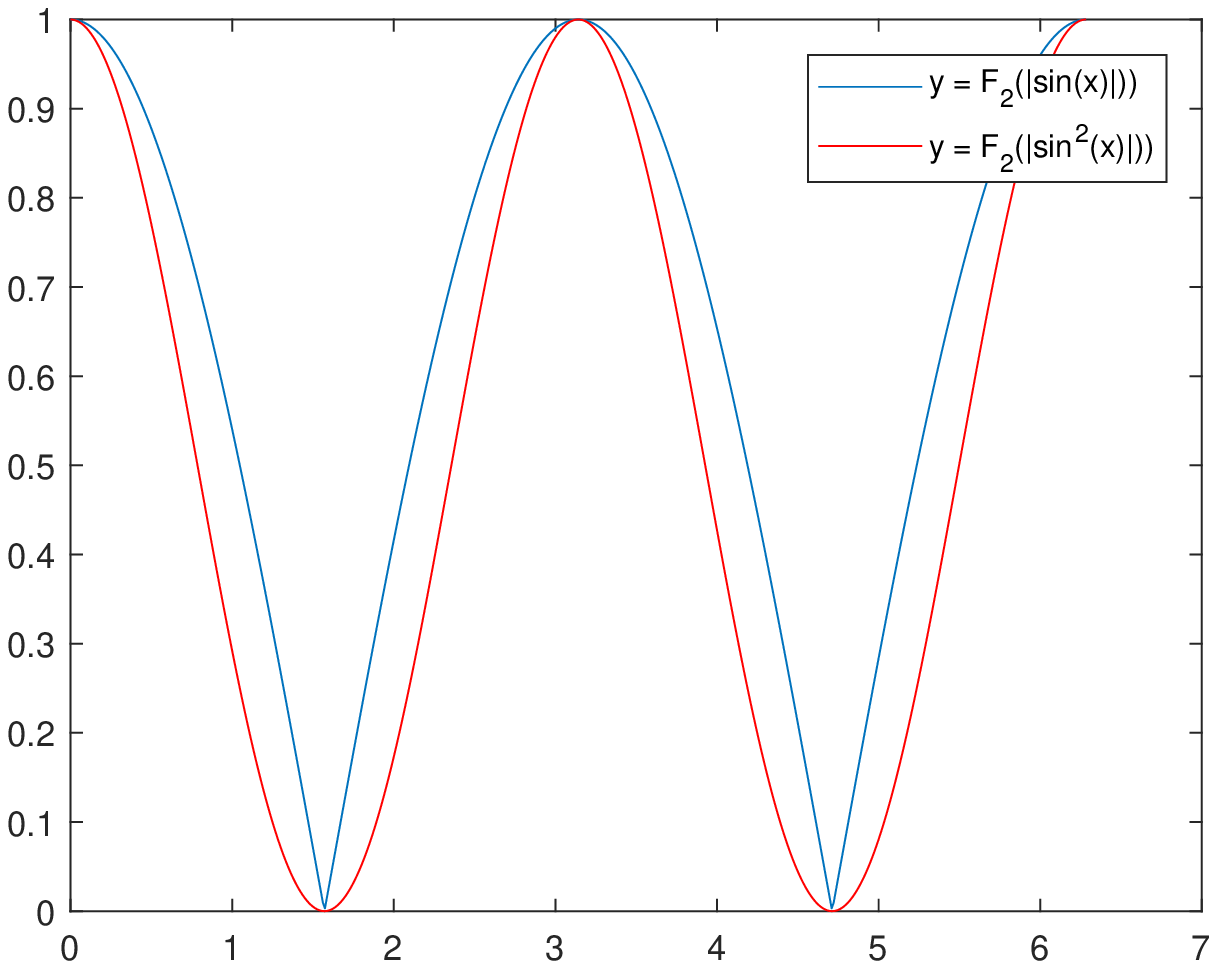}
\qquad\qquad
\includegraphics[scale=0.35]{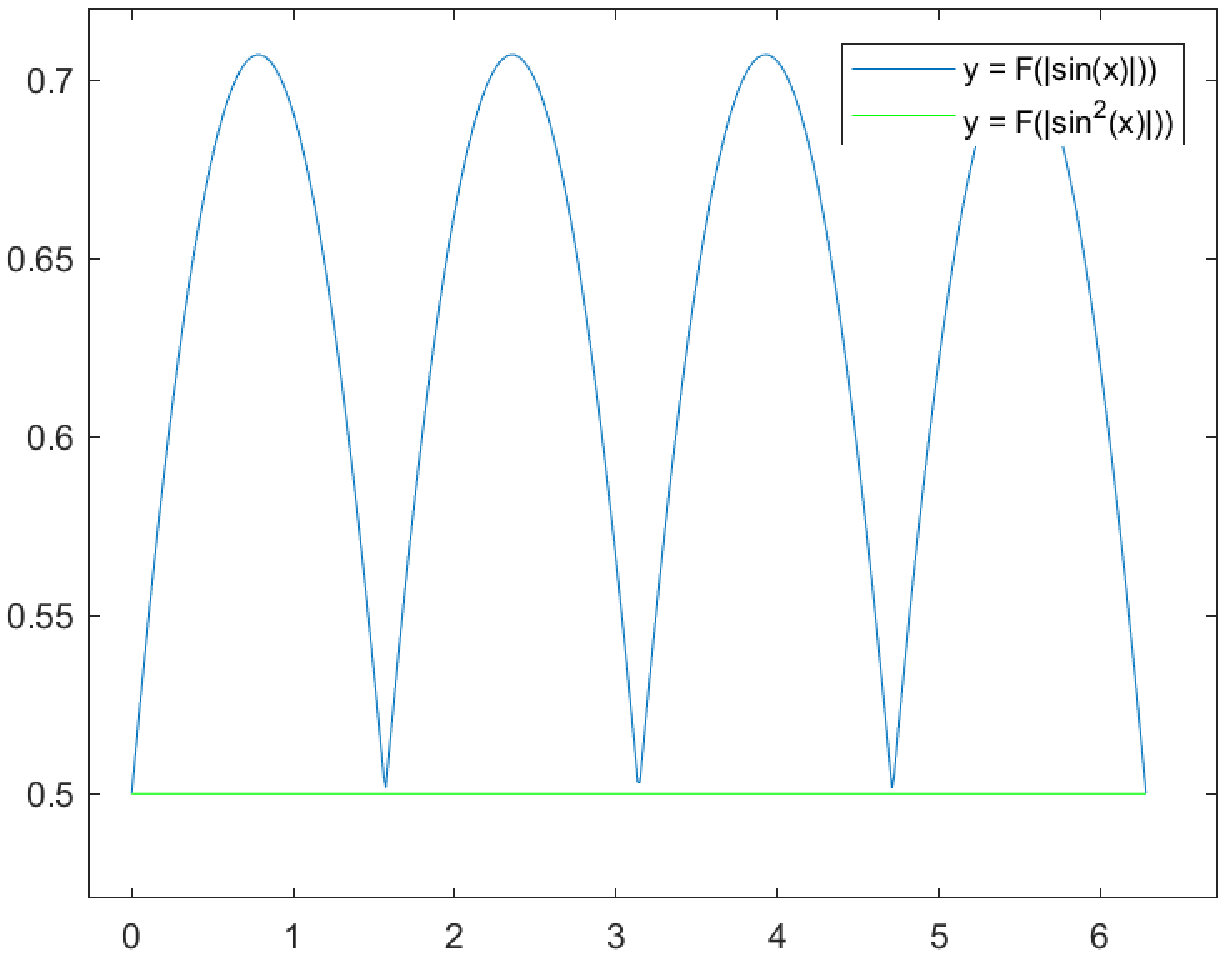}
\caption{On the left: $F_2(\bar{\varphi})$ and $F_2(\bar{\psi})$ have the same persistence diagrams. On the right: the persistence diagrams of $F(\bar{\varphi})$ and $F(\bar{\psi})$ are different from each other.}
\end{figure}

A similar argument still holds for values of $p$ greater than one. Under the same hypotheses about $\varPhi$, we can consider the same GENEOs $F_1$, $F_2$ and the functions $\bar{\varphi}= |\sin x|$ and $\hat{\psi}= (\sin^2 x)^\frac{1}{p}$. For the sake of simplicity, we fixed $p=3$ in order to represent the following figures. As we can see in Figures 3 and 4, we cannot distinguish $\bar{\varphi}$ and $\hat{\psi}$ by using persistent homology since their persistence diagrams coincide. Neither applying $F_1$ nor $F_2$ can help us, but when we apply $M_p(F_1,F_2)$ we can distinguish $\bar{\varphi}$ from $\hat{\psi}$ by means of their persistence diagrams (see Figure 4).

\begin{figure}[htbp]
\centering
\includegraphics[scale=0.35]{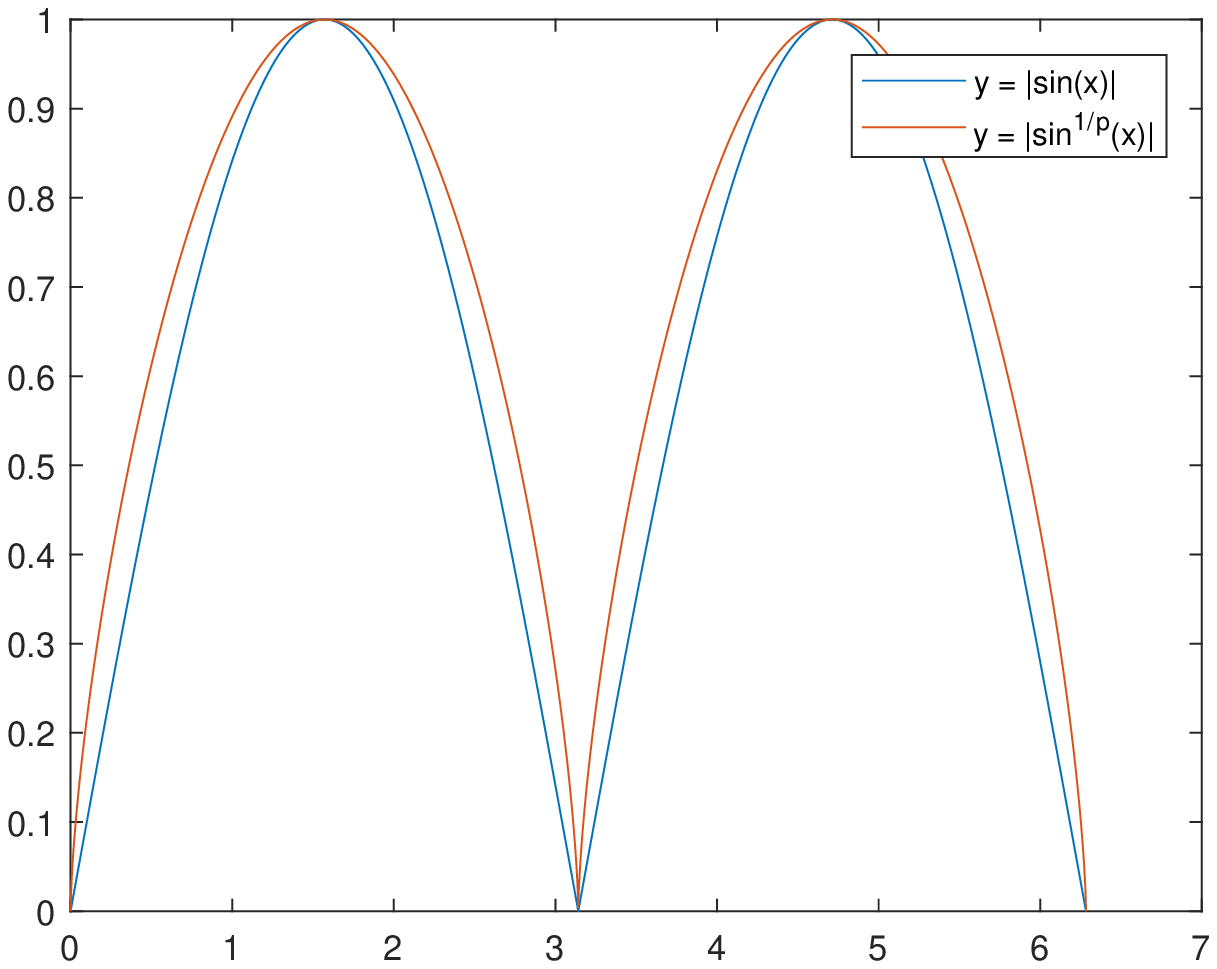}
\qquad\qquad
\includegraphics[scale=0.35]{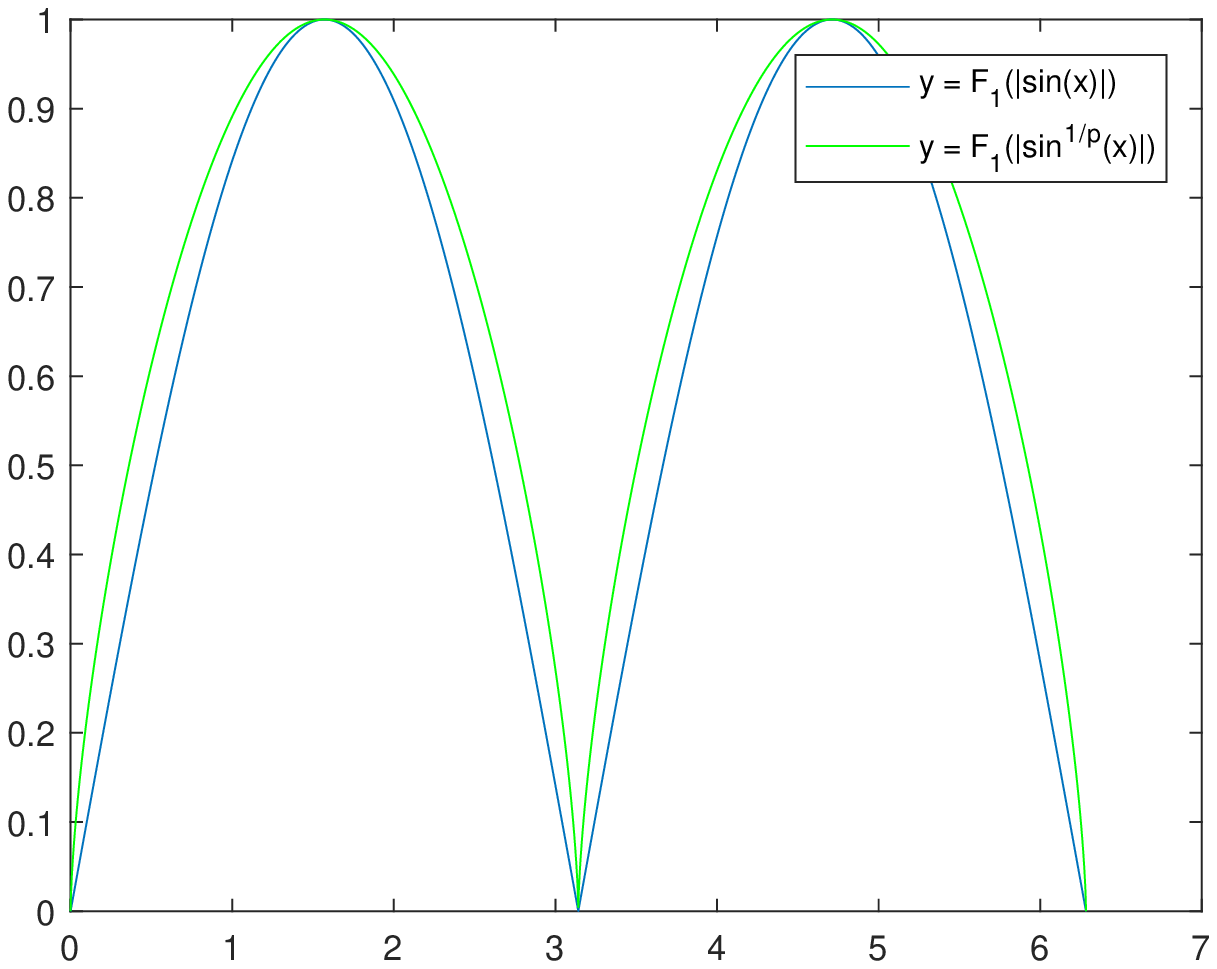}
\caption{On the left: $\bar{\varphi}$ and $\hat{\psi}$ have hence the same persistence diagrams. On the right: On the right: $F_1(\bar{\varphi})$ and $F_1(\hat{\psi})$ have the same persistence diagrams.}
\end{figure}

\begin{figure}[htbp]
\centering
\includegraphics[scale=0.35]{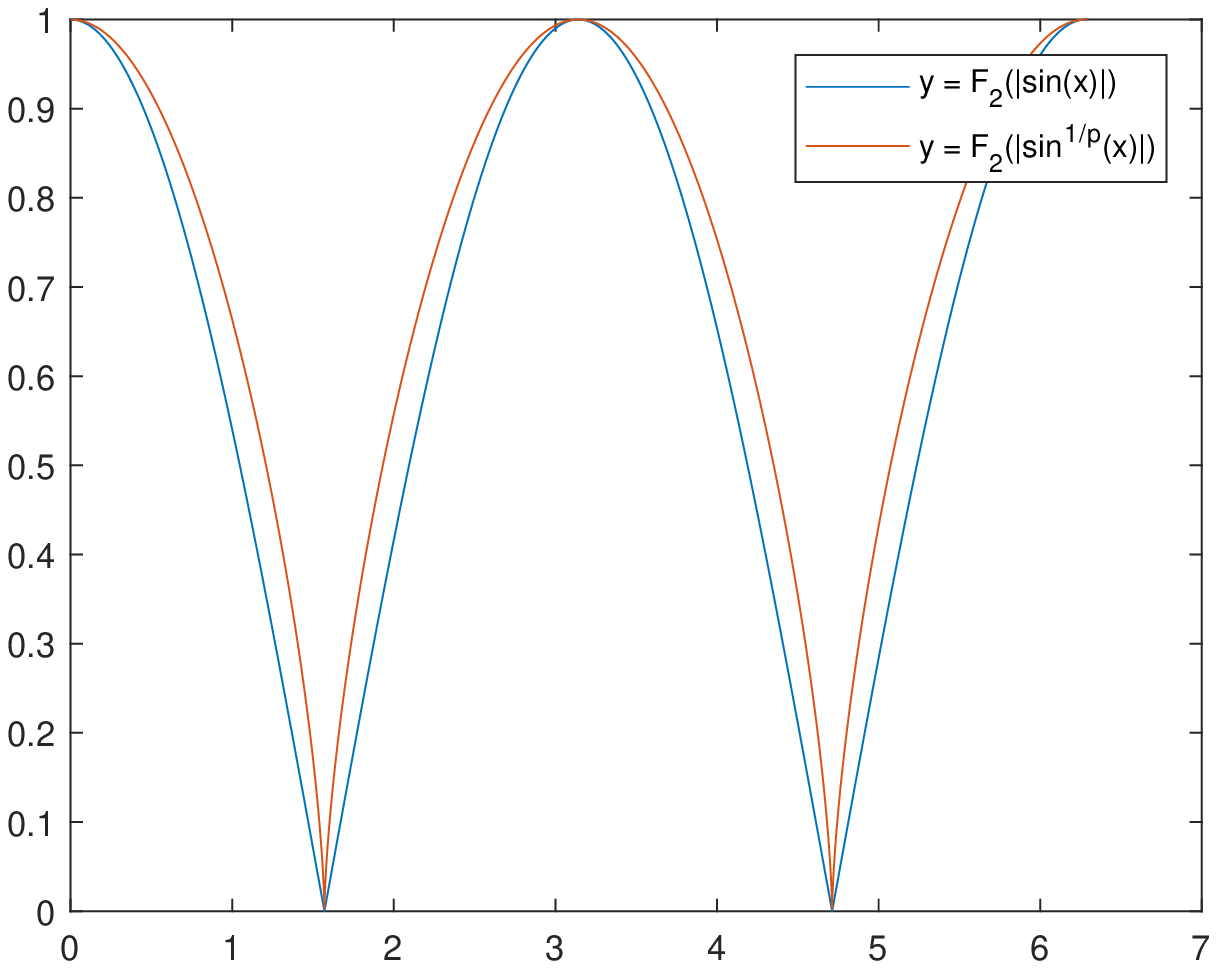}
\qquad\qquad
\includegraphics[scale=0.35]{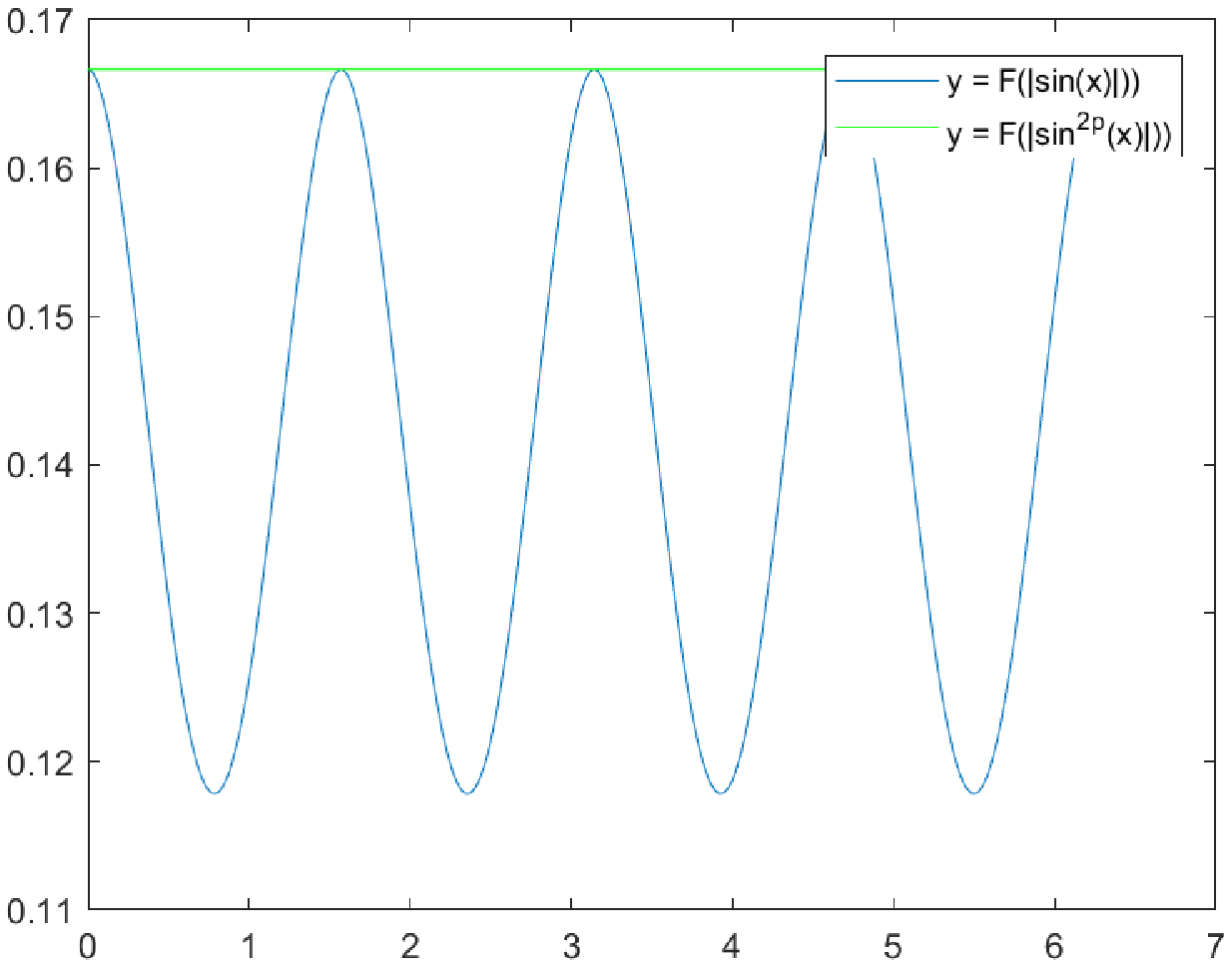}
\caption{On the left: $F_2(\bar{\varphi})$ and $F_2(\hat{\psi})$ have the same persistence diagrams. On the right: the persistence diagrams of $F(\bar{\varphi})$ and $F(\hat{\psi})$ are different from each other.}
\end{figure}

These examples justify the use of the previously defined power mean operators $M_p(F_1, \dots , F_n)$ to combine the information given by the operators $F_1, \dots , F_n$.

\section{Series of GENEOs}
\label{series}
First we recall some well-known results about series of functions.
\begin{theorem}
Let $(a_k)$ be a positive real sequence such that $(a_k)$ is decreasing and $\lim_{k \to \infty}a_k=0$. Let $(g_k)$ be a sequence of bounded functions from the topological space $X$ to $\mathbb{C}$. If there exists a real number $M>0$ such that
\begin{equation}
\left|\sum_{k=1}^n g_k(x)\right| \le M
\end{equation}
for every $x \in X$ and every $n \in \mathbb{N}$, then the series $\sum_{k=1}^{\infty} a_k g_k$ is uniformly convergent on $X$.
\end{theorem}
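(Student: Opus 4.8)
The plan is to establish uniform convergence by verifying the uniform Cauchy criterion for the partial sums $S_n := \sum_{k=1}^n a_k g_k$, which suffices since $\mathbb{C}$ is complete. The main device will be Abel summation (summation by parts), which lets us convert the boundedness hypothesis on the partial sums of $(g_k)$ into a usable $x$-independent estimate.

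First I would introduce the partial sums $G_n(x) := \sum_{k=1}^n g_k(x)$, with the convention $G_0 := 0$, so that $g_k = G_k - G_{k-1}$ and, by hypothesis, $|G_n(x)| \le M$ for every $n$ and every $x \in X$. For indices $m < n$ I would then rewrite the tail
\[
\sum_{k=m+1}^n a_k g_k(x) = \sum_{k=m+1}^n a_k\bigl(G_k(x) - G_{k-1}(x)\bigr)
\]
and apply summation by parts to obtain
\[
\sum_{k=m+1}^n a_k g_k(x) = a_n G_n(x) - a_{m+1} G_m(x) + \sum_{k=m+1}^{n-1} (a_k - a_{k+1})\, G_k(x).
\]

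The key step is to bound this expression uniformly in $x$. Since $(a_k)$ is decreasing, each difference $a_k - a_{k+1}$ is nonnegative, so the estimate $|G_k(x)| \le M$ yields
\[
\left|\sum_{k=m+1}^n a_k g_k(x)\right| \le M a_n + M a_{m+1} + M\sum_{k=m+1}^{n-1}(a_k - a_{k+1}).
\]
The remaining sum telescopes to $a_{m+1} - a_n$, so the whole right-hand side collapses to $2 M a_{m+1}$, a quantity that does not depend on $x$.

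Finally, because $a_k \to 0$, for any $\varepsilon > 0$ I can choose $N$ so large that $2 M a_{m+1} < \varepsilon$ whenever $m \ge N$; this forces $\lvert S_n(x) - S_m(x)\rvert < \varepsilon$ simultaneously for all $x \in X$ and all $n > m \ge N$, which is precisely the uniform Cauchy condition and hence gives uniform convergence on $X$. I expect the summation-by-parts rearrangement to be the only delicate point: once the telescoping is set up correctly, uniformity is automatic, since every intermediate estimate is governed by the single $x$-independent constant $M$ together with the decay of $(a_k)$.
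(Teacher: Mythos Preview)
Your argument is correct: the Abel summation identity is written correctly, the telescoping of $\sum_{k=m+1}^{n-1}(a_k-a_{k+1})$ to $a_{m+1}-a_n$ yields the uniform bound $2Ma_{m+1}$, and the uniform Cauchy criterion then applies since $a_k\to 0$. Note that the paper itself does not prove this theorem; it simply recalls it as a well-known result (it is the classical Dirichlet test for uniform convergence), so there is no proof in the paper to compare yours against. Your proof is the standard one.
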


The second result ensures us that a uniformly convergent series of continuous functions is a continuous function.

\begin{theorem}
Let $(f_n)$ be a sequence of continuous function from a compact topological space $X$ to $\mathbb{R}$. If the series $\sum_{k=1}^{\infty} f_k$ is uniformly convergent, then $\sum_{k=1}^{\infty} f_k$ is continuous from $X$ to $\mathbb{R}$.
\end{theorem}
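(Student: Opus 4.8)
The plan is to recognize the statement as a special case of the classical theorem that a uniform limit of continuous functions is itself continuous, applied to the partial sums of the series. First I would introduce the partial sums $S_n := \sum_{k=1}^{n} f_k$ together with the sum function $S := \sum_{k=1}^{\infty} f_k$. Each $S_n$ is a finite sum of continuous functions and is therefore continuous, while the assumed uniform convergence of the series means exactly that $S_n \to S$ uniformly on $X$. Thus the problem reduces to showing that the uniform limit of the continuous functions $S_n$ is continuous.

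Next I would carry out the standard $\varepsilon/3$ argument. Fix an arbitrary point $x_0 \in X$ and a real number $\varepsilon > 0$. Using the uniform convergence, I would choose an index $N$ such that $\sup_{x \in X} |S_N(x) - S(x)| < \varepsilon/3$. Since $S_N$ is continuous at $x_0$, there is a neighborhood $U$ of $x_0$ with $|S_N(x) - S_N(x_0)| < \varepsilon/3$ for every $x \in U$. Then for each $x \in U$ the triangle inequality gives
\begin{equation*}
|S(x) - S(x_0)| \le |S(x) - S_N(x)| + |S_N(x) - S_N(x_0)| + |S_N(x_0) - S(x_0)| < \varepsilon,
\end{equation*}
which shows that $S$ is continuous at $x_0$. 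As $x_0$ was arbitrary, $S$ is continuous on $X$.

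I do not expect any genuine obstacle here, since this is essentially a textbook result and the only real content is the three-term splitting above: the first and third terms are controlled uniformly in $x$ by the uniform convergence, and the middle term is controlled by the continuity of the single partial sum $S_N$. I would also remark that compactness of $X$ is not actually required for this conclusion, as the argument works for continuity at a point in any topological space, using neighborhoods rather than metric balls; compactness is presumably assumed because the construction of GENEOs from series in this section relies on it elsewhere (for instance, to guarantee that uniform convergence coincides with convergence in the sup-norm topology of $C^0_b(X,\mathbb{R})$, so that the resulting sum again lies in $\varPhi$).
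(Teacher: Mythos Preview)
Your argument is correct and is exactly the standard $\varepsilon/3$ proof one finds in any analysis text. Note, however, that the paper does not actually supply a proof of this statement: it is merely recalled as a well-known background result at the start of Section~\ref{series}, so there is no ``paper's own proof'' to compare against. Your observation that compactness of $X$ is unnecessary for the continuity conclusion is also correct; the hypothesis is there only because the subsequent construction of GENEOs takes place on a compact $X$.
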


Now we can define a series of GENEOs. Let us consider a compact pseudo-metric space $(X,d)$, a space of real-valued continuous functions $\varPhi$ on $X$ and a subgroup $G$ of the group $\text{Homeo}(X)$ of all homeomorphisms from $X$ to $X$, such that if $\varphi \in \varPhi$ and $g \in G$, then $\varphi \circ g \in \varPhi$. Let $(a_k)$ be a positive real sequence such that $(a_k)$ is decreasing and $\sum_{k=1}^{\infty}a_k \le 1$.
Let us suppose that $(F_k)$ is a sequence of GENEOs for $(\varPhi,G)$ and that for any $\varphi \in \varPhi$ there exists $M(\varphi)>0$ such that 

\begin{equation}
    \left|\sum_{k=1}^n F_k(\varphi)(x)\right| \le M(\varphi)
\end{equation}
for every $x \in X$ and every $n \in \mathbb{N}$. These assumptions fulfill the hypotheses of the previous theorems and ensure that the following operator is well-defined.
Let us consider the operator $F: C_b^0(X,\mathbb{R}) \longrightarrow C_b^0(X,\mathbb{R})$ defined by setting
\begin{equation}
    F(\varphi):= \sum_{k=1}^{\infty} a_k F_k(\varphi).
\end{equation}
\begin{proposition}
If $F(\varPhi)\subseteq \varPhi$, then F is a GENEO for $(\varPhi,G)$.
\end{proposition}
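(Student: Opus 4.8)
The plan is to verify directly the two defining properties of a GENEO, since the inclusion $F(\varPhi)\subseteq\varPhi$ is assumed in the statement and the meaningfulness of the definition of $F$ is already granted: the hypotheses on $(a_k)$ and on the partial sums $\sum_{k=1}^n F_k(\varphi)$ match exactly the assumptions of the two recalled theorems, so for each $\varphi$ the series $\sum_{k=1}^\infty a_k F_k(\varphi)$ converges uniformly to a bounded continuous function. Thus it remains only to check $G$-equivariance and non-expansivity.

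For $G$-equivariance, I would fix $\varphi\in\varPhi$ and $g\in G$ and evaluate at an arbitrary point $x\in X$. Since each $F_k$ is a GENEO, $F_k(\varphi\circ g)=F_k(\varphi)\circ g$, hence $F_k(\varphi\circ g)(x)=F_k(\varphi)(g(x))$. Summing against the weights $a_k$ gives
\[
F(\varphi\circ g)(x)=\sum_{k=1}^\infty a_k F_k(\varphi)(g(x))=\Big(\sum_{k=1}^\infty a_k F_k(\varphi)\Big)(g(x))=\big(F(\varphi)\circ g\big)(x),
\]
where the middle equality merely records that composing with $g$ amounts to evaluating the (pointwise convergent) series at $g(x)$. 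As $x$ is arbitrary, $F(\varphi\circ g)=F(\varphi)\circ g$.

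For non-expansivity, I would set $h_k:=F_k(\varphi_1)-F_k(\varphi_2)$ and use the non-expansivity of each $F_k$, which yields $\|h_k\|_\infty\le\|\varphi_1-\varphi_2\|_\infty$. For every $x\in X$ the pointwise triangle inequality for absolutely convergent series gives
\[
\Big|\sum_{k=1}^\infty a_k h_k(x)\Big|\le\sum_{k=1}^\infty a_k|h_k(x)|\le\sum_{k=1}^\infty a_k\|h_k\|_\infty\le\Big(\sum_{k=1}^\infty a_k\Big)\|\varphi_1-\varphi_2\|_\infty\le\|\varphi_1-\varphi_2\|_\infty,
\]
using $\sum_{k=1}^\infty a_k\le 1$ in the final step. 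Taking the supremum over $x\in X$ of the left-hand side then yields $\|F(\varphi_1)-F(\varphi_2)\|_\infty\le\|\varphi_1-\varphi_2\|_\infty$.

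The only point deserving care is the passage of the supremum through the infinite sum, i.e. justifying the triangle inequality for the whole series rather than for a finite partial sum. I expect this to be the main (and rather mild) obstacle: it is handled by first bounding the series \emph{pointwise} in $x$ — which is legitimate because $\sum_k a_k|h_k(x)|$ is dominated by $\big(\sum_k a_k\big)\|\varphi_1-\varphi_2\|_\infty<\infty$, so the series converges absolutely at each $x$ — and only afterwards taking the supremum over $x$. Once these facts are assembled (uniform convergence for well-definedness, term-by-term equivariance, and the pointwise absolute-value bound), both GENEO axioms follow and the proof is complete.
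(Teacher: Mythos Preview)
Your proof is correct and follows essentially the same approach as the paper: verify $G$-equivariance and non-expansivity directly from those of the $F_k$, together with $\sum_k a_k\le 1$. The only cosmetic difference is that you argue pointwise at $x$ (and then take suprema), whereas the paper works at the level of functions and passes through limits of partial sums; your pointwise formulation has the mild advantage that the equivariance step $\sum_k a_k\,F_k(\varphi)(g(x))=\big(\sum_k a_k F_k(\varphi)\big)(g(x))$ is immediate, without any appeal to uniform continuity of $g$.
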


\begin{proof}
\begin{itemize}
    \item Let $g\in G$. Since $F_k$ is $G$-equivariant for any $k$ and $g$ is uniformly continuous (because $X$ is compact), $F$ is $G$-equivariant:
    \begin{align*}
        F(\varphi \circ g) & =  \sum_{k=1}^{\infty} a_k F_k(\varphi\circ g)\\
                           & =  \sum_{k=1}^{\infty} a_k (F_k(\varphi)\circ g)\\
                           & =  \left( \sum_{k=1}^{\infty} a_k F_k(\varphi)\right) \circ g \\
                           & = F(\varphi) \circ g
    \end{align*}
    for any $\varphi \in \varPhi$.
    \item  Since $F_k$ is non-expansive for any $k$ and $\sum_{k=1}^\infty a_k \le 1$, $F$ is non-expansive:
    \begin{align*}
        \|F(\varphi_1) - F( \varphi_2)\|_\infty & = \left\| \sum_{k=1}^{\infty} a_k F_k(\varphi_1) -  \sum_{k=1}^{\infty} a_k F_k(\varphi_2) \right\|_\infty \\
        & = \left\| \lim_{n \to \infty} \left( \sum_{k=1}^n a_k F_k(\varphi_1) -  \sum_{k=1}^n a_k F_k(\varphi_2) \right) \right\|_\infty \\
        & = \lim_{n \to \infty} \left\| \sum_{k=1}^n a_k (F_k(\varphi_1) - F_k(\varphi_2)) \right\|_\infty \\
        & \le \lim_{n \to \infty} \sum_{k=1}^n (a_k \| F_k(\varphi_1) - F_k(\varphi_2)\|_\infty) \\
        & \le \lim_{n \to \infty} \sum_{k=1}^n (a_k \| \varphi_1 - \varphi_2 \|_\infty) \\
        & = \sum_{k=1}^\infty a_k \| \varphi_1 - \varphi_2 \|_\infty \\
        & \le \| \varphi_1 - \varphi_2 \|_\infty.
    \end{align*}
\end{itemize}

\end{proof}

\section*{Conclusions}
In this work we have illustrated some new methods to build new classes of $G$-equivariant non-expansive operators (GENEOs) from a given set of operators of this kind. The leading purpose of our work is to expand our knowledge about the topological space $\mathcal{F}(\varPhi,G)$ of all GENEOs. If we can well approximate the space $\mathcal{F}(\varPhi,G)$, we can obtain a good approximation of the natural pseudo-distance $d_G$ (Theorem \ref{maintheoremforG}). Searching new operators is a fundamental step in getting more information about the structure of $\mathcal{F}(\varPhi,G)$, and hence we are asked to find new methods to build GENEOs. Moreover, the approximation of $\mathcal{F}(\varPhi,G)$ can be seen as an approximation of the considered observer, represented as a collection of GENEOs. Many questions remain open. In particular, we should study an extended theoretical framework that involves GENEOs from the pair $(\varPhi,G)$ to a different pair $(\Psi, H)$. A future research about this is planned to be done.

\section*{Acknowledgment}
The research described in this article has been partially supported by GNSAGA-INdAM (Italy).

\bibliographystyle{model1-num-names}

\end{document}